\newtheorem{theorem}{Theorem}[section]
\newtheorem{corollary}[theorem]{Corollary}
\newtheorem{lemma}[theorem]{Lemma}
\theoremstyle{definition}
\newtheorem{definition}[theorem]{Definition}
\theoremstyle{remark}
\newtheorem*{example}{Example}
\title{Schur Stability of Matrix Segment via Bialternate Product}
\author{ \href{https://orcid.org/0000-0002-7561-3288}{\includegraphics[scale=0.06]{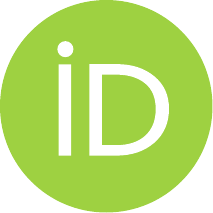}\hspace{1mm}\c{S}erife~Y{\i}lmaz}\\
	Department of Mathematics Education\\
	Mehmet Akif Ersoy University\\
	Burdur, 15030 \\
	\texttt{serifeyilmaz@mehmetakif.edu.tr} \\
}
\begin{document}
\maketitle

\begin{abstract}
	In this study, the problem of robust Schur stability of $n\times n$ dimensional matrix segments by using the bialternate product of matrices is considered. It is shown that the problem can be reduced to the existence of negative eigenvalues of two of three specially constructed matrices and the existence of eigenvalues belonging to the interval $[1,\infty)$ of the third matrix. A necessary and sufficient condition is given for the convex combinations of two stable matrices with rank one difference to be robust Schur stable. It is shown that the robust stability of the convex hull of a finite number of matrices whose two-by-two differences are of rank $1$ is equivalent to the robust stability of the segments formed by these matrices. Examples of applying the obtained results are given.
\end{abstract}

% keywords can be removed
\keywords{Matrix segment \and Schur stability \and Bialternate product \and  Matrix polytope}

\section{Introduction}
\label{sec1}
An $n\times n$ real matrix $A$ is said to be Schur stable if all eigenvalues of $A$ is contained in the open unit disk in the complex plane, i.e. the spectral radius $\rho(A)<1$ (see \cite{Bhattacharya1995}). This property is essential in the stability theory for discrete-time dynamical systems (see \cite[p. 137]{Horn1991}).

The robust stability analysis of uncertain systems, which can be modeled using matrix segments or matrix polytopes, has attracted significant attention in recent years due to its wide range of applications in control theory \cite{Oliveira2007,Chesi2010}. A polytope of matrices, which is the convex hulls of a finite number of matrices, are established as one of the standard representations of uncertainties involved in state-space models of control systems \cite{Barmish1994,Bhattacharya1995}. When a polytope of matrices formulates the system matrices of uncertain systems, a stability problem of the polytope naturally arises. The problem checking whether all convex combinations of k matrices are stable is NP-hard (see \cite{Gurvits2009}). One generally can not expect extreme point or edge results on the stability of polytope of matrices (see \cite{Mori2000} and references therein). Polytopes of matrices appear in stability problems of linear switched systems as well. For example, the stability of the polytope of matrices is necessary condition for the asymptotical stability of a positive linear switched system defined by a finite number of matrices \cite{Fainshil2009}.

Some more general problems covering this topic concern parameter-dependent matrices. In \cite{Buyukkoroglu2015,Chesi2013}, necessary and sufficient conditions are formulated for the eigenvalues of a parameter-dependent matrix to belong to a certain region $D$ of the complex plane. The stability problem of a matrix polytope is related to the problem of the existence of a common quadratic Lyapunov function (CQLF) for the extreme matrices of this polytope. In \cite{Mason2004}, it is concerned with the CQLF existence problem for a family of two discrete-time LTI systems and a CQLF existence-nonexistence theorems are given for a pair of stable matrices in there.

In \cite{Elsner1998}, using a $(k-1)n\times (k-1)n$ dimensional block matrix, a characterization of the Schur stability of all convex combinations of $n\times n$ dimensional Schur stable matrices $A_1, A_2, \dots, A_k$ is derived. Schur stability of all convex combinations of two complex matrices $A_1$, $A_2$ has been characterized in \cite{Soh1990}. Since the Kronecker product is used, $n^2\times n^2$ dimensional matrices arise in this characterization.

This paper aims to solve this problem with lower dimensional matrices. In addition, we show that obtained results  are valid for the stability domain
\begin{equation} \label{dkumesi}
D=\{(x,y)\in \mathbb{R}^2: \ (x-\delta)^2+y^2<r\},
\end{equation}
where $\delta>0$, $r>0$. As an application of these results, we give a necessary and sufficient condition for the stability of the polytope of matrices whose differences are of rank one.

Let $A_1, A_2 \in \mathbb{R}^{n \times n}$ be Schur stable matrices. If each matrix from the segment
\[
  [A_1,A_2]=\{C(\alpha)=\alpha A_1+(1-\alpha) A_2: \ \alpha\in[0,1]\}.
\]
is Schur stable, the segment $[A_1,A_2]$ is called (robust) stable segment.

\section{Schur stability of all convex combinations of two matrices}\label{sec2}

Let $A$ be an $n\times n$ Schur matrix and $I$ be the identity matrix. If $\lambda$ an eigenvalue of $A$ then $\lambda+\alpha$ is an eigenvalue of $A+\alpha I$, where $\alpha$ is scalar. If $A$ is Schur matrix, $(I-A)$ and $(I+A)$ are nonsingular. 

The boundary of the Schur stability region is the unit circle $\partial D=\{z \in \mathbb{C}: \ |z|=1\}$. If there is an unstable matrix on a matrix segment with stable endpoints, then according to continuous root dependence \cite[p. 52]{Barmish1994} there exists a matrix in this segment with an eigenvalue on the unit circle. For this reason, we will express the following three lemmas regarding the absence of an eigenvalue on the unit circle. The first two of these are related to whether there are matrices from the segment with an eigenvalue of $\pm1$. The third lemma is about the existence of matrices from the segment with complex eigenvalues on the unit circle.
\begin{lemma} \label{lemma1}
	Let $A_1$ and $A_2$ be Schur matrices. For all $\alpha\in [0,1]$, \\
	$(-1)^n \det[C(\alpha)-I]>0$ if and only if $(I-A_1)(I-A_2 )^{-1}$ has no negative real eigenvalue.
\end{lemma}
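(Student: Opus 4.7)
The plan is to rewrite the determinant so it depends on $\alpha$ through a single matrix pencil, and then read off exactly when that pencil is singular. First, since $I-C(\alpha)=-\bigl[C(\alpha)-I\bigr]$, we have $(-1)^n\det[C(\alpha)-I]=\det[I-C(\alpha)]$, and using $C(\alpha)=\alpha A_1+(1-\alpha)A_2$ this equals
\[
\det\bigl[\alpha(I-A_1)+(1-\alpha)(I-A_2)\bigr].
\]
Because $A_2$ is Schur, $I-A_2$ is nonsingular, so I can factor it out on the right and obtain
\[
(-1)^n\det[C(\alpha)-I]=\det(I-A_2)\cdot\det\bigl[\alpha M+(1-\alpha)I\bigr],
\]
where $M=(I-A_1)(I-A_2)^{-1}$.

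Next I would observe that $\det(I-A)>0$ for any Schur matrix $A$: real eigenvalues lie in $(-1,1)$ giving positive factors $1-\lambda$, while non-real eigenvalues appear in conjugate pairs contributing $|1-\lambda|^2>0$. Hence $\det(I-A_1)>0$ and $\det(I-A_2)>0$, so the quantity $(-1)^n\det[C(\alpha)-I]$ is strictly positive at $\alpha=0$ and at $\alpha=1$. By continuity of the determinant in $\alpha$, positivity throughout $[0,1]$ is equivalent to non-vanishing throughout $[0,1]$.

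It remains to translate non-vanishing of $\det[\alpha M+(1-\alpha)I]$ into a spectral condition on $M$. For $\alpha\in(0,1]$, pulling out $\alpha^n$ shows this determinant vanishes iff $-\tfrac{1-\alpha}{\alpha}$ is an eigenvalue of $M$. As $\alpha$ runs over $(0,1)$, the quantity $-\tfrac{1-\alpha}{\alpha}$ sweeps out the open negative half-line $(-\infty,0)$, while $\alpha=1$ gives the value $0$, which is excluded because $\det M=\det(I-A_1)/\det(I-A_2)>0$. The case $\alpha=0$ gives $\det I=1\neq 0$. Therefore some $\alpha\in[0,1]$ makes the expression vanish iff $M$ has a strictly negative real eigenvalue, which is precisely the contrapositive of the claim.

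I do not expect a real obstacle here; the only delicate point is bookkeeping at the endpoints, namely verifying $\det(I-A_i)>0$ and checking that $\alpha=1$ (eigenvalue $0$) cannot produce a vanishing determinant, so that the spectral condition cleanly reads as ``no negative real eigenvalue'' rather than ``no non-positive real eigenvalue.''
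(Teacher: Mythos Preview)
Your proof is correct and follows essentially the same approach as the paper: the same factorization $(-1)^n\det[C(\alpha)-I]=\det(I-A_2)\det[\alpha M+(1-\alpha)I]$, the same identification of vanishing with $M$ having the eigenvalue $-\tfrac{1-\alpha}{\alpha}$, and the same continuity/endpoint argument. The only minor difference is that you establish $\det(I-A_i)>0$ directly from the eigenvalue structure (real factors positive, complex factors in conjugate pairs), whereas the paper invokes the Routh--Hurwitz criterion for the Hurwitz matrix $A_i-I$; your version is arguably cleaner and also handles the endpoint $\alpha=1$ more explicitly.
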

\begin{proof}
$(\Leftarrow):$ Assume that $(I-A_1)(I-A_2 )^{-1}$ has no negative real eigenvalue and $\alpha \in (0,1]$, then 
\[
  \begin{array}{lcl}
    (-1)^n \det[C(\alpha)-I]&=& \det[I-C(\alpha)]\\
    &=&\det[I-(\alpha A_1+(1-\alpha) A_2)]\\
     &=&\det[\alpha I+(1-\alpha)I-\alpha A_1-(1-\alpha) A_2] \\
     &=&\det[\alpha(I-A_1)+(1-\alpha)(I-A_2)] \\
     &=&\det[\alpha\left((I-A_1)(I-A_2)^{-1}+\frac{1-\alpha}{\alpha} I\right)(I-A_2)] \\
     &=&\alpha^n.\det[I-A_2].\det[(I-A_1)(I-A_2)^{-1}+\frac{1-\alpha}{\alpha} I].
  \end{array}
\]
Here $\det[I-A_2]\not=0$ because the matrix $A_2$ is Schur stable. Since $(I-A_1) (I-A_2)^{-1}$ has no negative eigenvalue, $\det[(I-A_1)(I-A_2)^{-1}+\frac{1-\alpha}{\alpha} I]\not=0$ for each $\alpha\in (0,1]$ ($(1-\alpha)/\alpha \in(0,\infty)$ $\Leftrightarrow$ $\alpha \in (0,1]$). As a result, 
  \begin{equation} \label{dnksifir}
    (-1)^n\det(C(\alpha)-I)\not=0 \mbox{ for all } \alpha \in (0,1]. 
  \end{equation} 
Since $A_1,A_2$ are Schur stable matrices, the matrices $A_1-I$ and $A_2-I$  are Hurwitz stable (all eigenvalues lie in the open left half of the complex plane). The Routh-Hurwitz stability criteria lead to the positivity of the coefficients of the characteristic polynomials of $A_1-I$ and $A_2-I$ \cite[p. 9]{Barmish1994}. Here $\det[C(0)-I]=\det[A_2-I]$, $\det[C(1)-I]=\det[A_1-I]$. 

The constant terms of the characteristic polynomials of the matrices $A_1-I$ and $A_2-I$ are $(-1)^n \det[A_1-I]>0$, $(-1)^n \det[A_2-I]>0$, respectively. Then
  \begin{equation} \label{dnkpozitif}
    0<(-1)^n\det[C(0)-I], \quad 0<(-1)^n\det[C(1)-I].
  \end{equation}
From the continuity of $\alpha \to (-1)^n\det[C(\alpha)-I]$ and equations \eqref{dnksifir}-\eqref{dnkpozitif}, $(-1)^n\det[C(\alpha)-I]>0$ for each $\alpha\in [0,1]$. 

$(\Rightarrow):$ Assume that $(-1)^n \det[C(\alpha)-I]>0$. Take arbitrary $\alpha \in (0,1]$. Then
\begin{equation} \label{dnk23}
  \begin{array}{lcl}
    0<(-1)^n \det[C(\alpha)-I]&=&\det[I-C(\alpha)]\\
    &=&\alpha^n \det[I-A_2]\det[(I-A_1)(I-A_2)^{-1}-\beta I],
  \end{array}
\end{equation}
where $\beta=-(1-\alpha)/\alpha$. Since the mapping $-(1-\alpha)/\alpha: (0,1] \to (-\infty,0)$ is onto, from \eqref{dnk23} it follows that for all $\beta<0$
\[
  \det[(I-A_1)(I-A_2)^{-1}-\beta I]\not=0.
\]
Therefore $(I-A_1)(I-A_2)^{-1}$ has no negative eigenvalue.
\end{proof}

\begin{corollary} \label{cor21}
  Let $A_1$ and $A_2$ be Schur stable. $C(\alpha)$ has no eigenvalue $\lambda=1$ $\Leftrightarrow$ $(I-A_1)(I-A_2)^{-1}$ has no negative real eigenvalue.
\end{corollary}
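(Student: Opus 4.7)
The plan is to derive the corollary directly from Lemma~\ref{lemma1} by observing that the two sides of the claimed equivalence both translate into statements about whether $\det[C(\alpha)-I]$ vanishes on $[0,1]$. The key reformulation is that $1$ is an eigenvalue of $C(\alpha)$ precisely when $\det[C(\alpha)-I]=0$, so the statement ``$C(\alpha)$ has no eigenvalue $\lambda=1$'' should be read as ``$\det[C(\alpha)-I]\neq 0$ for every $\alpha\in[0,1]$.''

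First I would handle the easy direction. If $(I-A_1)(I-A_2)^{-1}$ has no negative real eigenvalue, Lemma~\ref{lemma1} immediately gives $(-1)^n\det[C(\alpha)-I]>0$ on $[0,1]$, which in particular rules out $\det[C(\alpha)-I]=0$, so no $C(\alpha)$ has $1$ as an eigenvalue.

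For the converse I would reuse the sign information already produced inside the proof of Lemma~\ref{lemma1}. Since $A_1$ and $A_2$ are Schur stable, the characteristic polynomials of $A_1-I$ and $A_2-I$ have positive coefficients, giving the endpoint positivity $(-1)^n\det[C(0)-I]>0$ and $(-1)^n\det[C(1)-I]>0$. The map $\alpha\mapsto(-1)^n\det[C(\alpha)-I]$ is a polynomial in $\alpha$, hence continuous; under the hypothesis that no $C(\alpha)$ has eigenvalue $1$, it never vanishes on $[0,1]$, so by the intermediate value theorem it stays strictly positive throughout. Lemma~\ref{lemma1} then yields the nonexistence of a negative real eigenvalue of $(I-A_1)(I-A_2)^{-1}$.

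There is essentially no obstacle; the only point of care is to make sure the equivalence in Lemma~\ref{lemma1} (which is formulated with the strict inequality $>0$) is bridged to the nonvanishing condition, and this is exactly what the endpoint positivity together with continuity accomplishes.
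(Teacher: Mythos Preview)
Your proposal is correct and follows essentially the same route as the paper. The paper handles $(\Leftarrow)$ identically, and for $(\Rightarrow)$ it simply points back to the portion of the proof of Lemma~\ref{lemma1} beginning at~\eqref{dnksifir}, which is precisely the endpoint-positivity-plus-continuity argument you spell out explicitly.
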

\begin{proof}
$(\Leftarrow):$ If $(I-A_1)(I-A_2)^{-1}$ has no negative real eigenvalue then by Lemma \ref{lemma1} $(-1)^n \det[C(\alpha)-I]>0$, consequently $\det[C(\alpha)-I]\not=0$ and $\lambda=1$ is not eigenvalue of $C(\alpha)$ for all $\alpha \in [0,1]$.

$(\Rightarrow):$ If $C(\alpha)$ has no eigenvalue $\lambda=1$ then $\det[C(\alpha)-I]\not=0$ and by the proof of Lemma \ref{lemma1} (see the part of Lemma \ref{lemma1} starting from formula \eqref{dnksifir}) $(-1)^n \det[C(\alpha)-I]>0$ and by Lemma \ref{lemma1} $(I-A_1)(I-A_2)^{-1}$ has no negative real eigenvalue.
\end{proof}

\begin{lemma} \label{lemma2}
	Let $A_1$ and $A_2$ be Schur matrices. For all $\alpha\in [0,1]$, $ \det[C(\alpha)+I]>0$ if and only if $(I+A_1 )(I+A_2)^{-1}$ has no negative real eigenvalue.
\end{lemma}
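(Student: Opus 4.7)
My plan is to mirror the proof of Lemma~\ref{lemma1} almost verbatim, trading $I-A_i$ for $I+A_i$ and correspondingly dropping the $(-1)^n$ sign factor. For $\alpha\in(0,1]$ the analogous factorization is
\[
  C(\alpha)+I \;=\; \alpha(I+A_1)+(1-\alpha)(I+A_2) \;=\; \alpha\bigl[(I+A_1)(I+A_2)^{-1}+\tfrac{1-\alpha}{\alpha}I\bigr](I+A_2),
\]
which yields
\[
  \det[C(\alpha)+I] \;=\; \alpha^n\,\det[I+A_2]\,\det\bigl[(I+A_1)(I+A_2)^{-1}+\tfrac{1-\alpha}{\alpha}I\bigr].
\]
Since $A_1,A_2$ are Schur, $I+A_i$ is invertible and so is $(I+A_1)(I+A_2)^{-1}$; hence the last determinant vanishes for some $\alpha\in(0,1]$ iff $(I+A_1)(I+A_2)^{-1}$ has an eigenvalue in the half-line $(-\infty,0)$ (the endpoint $0$ is precluded by invertibility, which corresponds to $\alpha=1$). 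This is the algebraic heart of both directions.

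The one genuinely new ingredient over Lemma~\ref{lemma1} is the sign of the determinant at the endpoints, where now there is no $(-1)^n$ in front. Whereas Lemma~\ref{lemma1} invoked Routh--Hurwitz for the Hurwitz matrix $A_i-I$, I would observe here that $-(A_i+I)$ is Hurwitz: if $|\lambda|<1$ then $\mathrm{Re}(-\lambda-1)<0$. Routh--Hurwitz then gives $(-1)^n\det[-(A_i+I)]>0$, i.e.\ $\det[A_i+I]>0$; equivalently, all eigenvalues of $A_i+I$ lie in the open right half-plane, so complex-conjugate pairs contribute $|1+\lambda|^2>0$ and real eigenvalues lie in $(0,2)$. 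Therefore $\det[C(0)+I]=\det[I+A_2]>0$ and $\det[C(1)+I]=\det[I+A_1]>0$.

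With these two ingredients, both implications close as in Lemma~\ref{lemma1}. For $(\Leftarrow)$, the hypothesis makes the factored expression nonzero for every $\alpha\in(0,1]$, and continuity of $\alpha\mapsto\det[C(\alpha)+I]$ on $[0,1]$ combined with positivity at $\alpha=0$ forces strict positivity on the whole interval. For $(\Rightarrow)$, setting $\beta=-(1-\alpha)/\alpha$ and letting $\alpha$ range over $(0,1)$ sweeps $\beta$ over all of $(-\infty,0)$; the positivity hypothesis forces $\det[(I+A_1)(I+A_2)^{-1}-\beta I]\neq 0$ for each such $\beta$, ruling out any negative real eigenvalue. I do not anticipate a real obstacle; the only point to watch carefully is the endpoint sign bookkeeping, where the $(-1)^n$ factor of Lemma~\ref{lemma1} is exactly absorbed by passing from $A_i-I$ being Hurwitz to $-(A_i+I)$ being Hurwitz.
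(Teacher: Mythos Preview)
Your argument is correct. The factorization, the endpoint-sign verification via the Hurwitz property of $-(A_i+I)$ (or, equivalently, via the eigenvalue description of $I+A_i$), and the bijection $\alpha\mapsto -(1-\alpha)/\alpha$ from $(0,1)$ onto $(-\infty,0)$ all go through exactly as you outline.

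The paper, however, does not re-run the Lemma~\ref{lemma1} argument. It instead performs a one-line reduction: set $B_i=-A_i$, observe that $A$ is Schur iff $-A$ is Schur, and apply Lemma~\ref{lemma1} to the pair $B_1,B_2$. Then $(I-B_i)=(I+A_i)$ and $(-1)^n\det[\alpha B_1+(1-\alpha)B_2-I]=(-1)^{2n}\det[C(\alpha)+I]=\det[C(\alpha)+I]$, which gives the statement immediately. Your approach has the advantage of being self-contained and of making the $(-1)^n$ bookkeeping explicit via the Hurwitz observation for $-(A_i+I)$; the paper's approach buys brevity by exploiting the symmetry $A\leftrightarrow -A$ of Schur stability and treating Lemma~\ref{lemma1} as a black box. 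Both are valid; the paper's substitution trick is the shorter path.
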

\begin{proof}
  Define $B_1=-A_1$, $B_2=-A_2$ and in view of $A$ is Schur stable $\Leftrightarrow$ $-A$ is Schur stable by Lemma \ref{lemma1} we have 
\[
  (I-B_1)(I-B_2)^{-1} \mbox{ has no negative real eigenvalue if  and only if}
\]
\[
  (-1)^n \det[\alpha B_1+(1-\alpha)B_2-I]>0
\]
or 
\[
  (I+A_1)(I+A_2)^{-1} \mbox{ has no negative real eigenvalue if and only if}
\]
\[
  (-1)^{2n}\det[\alpha A_1+(1-\alpha)A_2+I]=\det[C(\alpha)+I]>0.
\]

\end{proof}
\begin{corollary} \label{cor22}
  Let $A_1$ and $A_2$ be Schur stable. $C(\alpha)$ has no eigenvalue $\lambda=-1$ $\Leftrightarrow$ $(I+A_1)(I+A_2)^{-1}$ has no negative real eigenvalue.
\end{corollary}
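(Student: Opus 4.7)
The plan is to mirror the proof of Corollary \ref{cor21}, only now invoking Lemma \ref{lemma2} in place of Lemma \ref{lemma1}. Both directions are essentially bookkeeping once the previous results are in hand.

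For the $(\Leftarrow)$ direction, I would assume that $(I+A_1)(I+A_2)^{-1}$ has no negative real eigenvalue. Lemma \ref{lemma2} then gives $\det[C(\alpha)+I] > 0$ for every $\alpha \in [0,1]$. In particular $\det[C(\alpha)+I] \neq 0$, so $-1$ cannot be an eigenvalue of $C(\alpha)$ for any $\alpha \in [0,1]$.

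For the $(\Rightarrow)$ direction, I would start from the hypothesis that $\det[C(\alpha)+I] \neq 0$ for every $\alpha \in [0,1]$. The key trick is the substitution $B_i = -A_i$ used inside Lemma \ref{lemma2}: since $A_i$ is Schur iff $-A_i$ is Schur, I can transport the sign analysis from Lemma \ref{lemma1} and Corollary \ref{cor21} to the ``$+I$'' setting. Concretely, $\det[C(\alpha)+I] = (-1)^n \det[\alpha B_1 + (1-\alpha) B_2 - I]$, so the assumption translates to saying that the corresponding determinant for the $B$-segment never vanishes; by the proof of Lemma \ref{lemma1} (the argument starting from equations \eqref{dnksifir}--\eqref{dnkpozitif}, together with continuity) this quantity is strictly positive throughout $[0,1]$, hence $\det[C(\alpha)+I] > 0$ on $[0,1]$. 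Applying Lemma \ref{lemma2} now yields that $(I+A_1)(I+A_2)^{-1}$ has no negative real eigenvalue.

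Both directions are routine given the machinery already built; there is no real obstacle. The only thing to be a bit careful about is tracking the sign factor $(-1)^n$ that appears when swapping $A_i \leftrightarrow -A_i$, so as to land on the statement $\det[C(\alpha)+I] > 0$ (with no sign) rather than $(-1)^n\det[C(\alpha)+I] > 0$; the even power $(-1)^{2n}=1$ noted at the end of Lemma \ref{lemma2} is exactly what kills this sign.
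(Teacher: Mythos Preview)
Your argument is correct and is exactly the natural one: the paper actually states Corollary~\ref{cor22} without proof, treating it as immediate from Lemma~\ref{lemma2} by the same reasoning as Corollary~\ref{cor21} under the substitution $B_i=-A_i$. Your write-up simply makes that implicit step explicit, including the $(-1)^{2n}=1$ sign check already noted at the end of the proof of Lemma~\ref{lemma2}.
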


In the following we obtain condition on nonexistence of eigenvalue of $C(\alpha)$ in the set $\Theta=\{z \in \mathbb{C}: \ |z|=1,\ z\not=\pm 1\}$. This condition is given in term of the bialternate product of matrices.

\begin{definition}{\cite{Elsner2011}}
	The bialternate product of matrices $A=[a_{ij}],\, B=[b_{ij}]\in \mathbb{R}^{n \times n}$ is defined to be the matrix $F=A\cdot B$ where the entries of $F$ are given by
	\[
	  f_{ij,kl}=\frac{1}{2}\left(
	  \begin{vmatrix}
        a_{ik} & a_{il}\\
	  	b_{jk} & b_{jl}	  	
	  \end{vmatrix}+
	  \begin{vmatrix}
        b_{ik} & b_{il}\\
	  	a_{jk} & a_{jl}	  	
	  \end{vmatrix}
	  \right)
	\]
	where $(i,j),\, (k,l)\in Q_{2,n}=\{(p,q): \ p<q\}$.
\end{definition}

The dimension of $A\cdot B$ is $d\times d$, where $d=n(n-1)/2$. If the eigenvalues of $A$ are $\lambda_1$, $\lambda_2$, $\dots$, $\lambda_n$ then the eigenvalues of $A\cdot A$ are written $\lambda_i \lambda_j$ where $i=1,2,\dots,n-1$ and $j=i+1,i+2,\dots,n$ (see \cite{Fuller1968,Govarets1999,Elsner2011}). For example, if a matrix $A\in \mathbb{R}^{3\times 3}$ has three eigenvalues $\lambda_1$, $\lambda_2$, $\lambda_3$ then the eigenvalues of $A\cdot A$ are $\lambda_1 \lambda_2$, $\lambda_1 \lambda_3$, $\lambda_2 \lambda_3$.

\begin{lemma} \label{lemma4}
  A matris $A\in \mathbb{R}^{n \times n}$ has an eigenvalue in $\Theta$ if and only if $\nu(A):=\det[I-A\cdot A]=0$.
\end{lemma}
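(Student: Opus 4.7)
The plan is to reduce the statement to the spectral description of $A\cdot A$ recalled immediately before the lemma: if $\lambda_1,\ldots,\lambda_n$ are the eigenvalues of $A$, then the eigenvalues of $A\cdot A$ are the $d=n(n-1)/2$ products $\lambda_i\lambda_j$ with $1\le i<j\le n$. Since the determinant is the product of the eigenvalues, this gives the factorisation
\[
\nu(A)=\det[I-A\cdot A]=\prod_{1\le i<j\le n}(1-\lambda_i\lambda_j),
\]
so $\nu(A)=0$ is purely algebraic: it is equivalent to the existence of a pair of indices $i\ne j$ with $\lambda_i\lambda_j=1$. The whole lemma therefore reduces to matching this algebraic condition with the geometric one ``$A$ has an eigenvalue in $\Theta$''.

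The forward direction $(\Rightarrow)$ is straightforward. Assuming $\lambda\in\Theta$ is an eigenvalue of $A$, reality of $A$ forces $\overline{\lambda}$ to be an eigenvalue as well, and the hypotheses $|\lambda|=1$, $\lambda\ne\pm 1$ make $\lambda$ non-real, so $\overline{\lambda}$ is a second, distinct eigenvalue of $A$. Hence there are indices $i\ne j$ with $\lambda_i=\lambda$, $\lambda_j=\overline{\lambda}$, and $\lambda_i\lambda_j=|\lambda|^2=1$, which makes the corresponding factor vanish in the product formula for $\nu(A)$.

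The converse $(\Leftarrow)$ is the delicate direction and will be the main obstacle. From $\nu(A)=0$ I extract $i\ne j$ with $\lambda_i\lambda_j=1$ and must produce an eigenvalue in $\Theta$. Because $A$ is real, $\overline{\lambda_i}$ lies in the spectrum, and $\lambda_j=1/\lambda_i=\overline{\lambda_i}/|\lambda_i|^2$. The natural way for $\lambda_j$ to coincide with the forced companion $\overline{\lambda_i}$ is $|\lambda_i|=1$, which together with $\lambda_i\ne\pm 1$ lands $\lambda_i$ in $\Theta$. I then intend to close the argument by handling the parasitic algebraic possibilities separately -- a real reciprocal pair $(\mu,1/\mu)$ off the unit circle, or an off-circle quadruple $(\mu,\overline{\mu},1/\mu,1/\overline{\mu})$ -- either by a case analysis on whether $\lambda_i$ is real and on the sign of $|\lambda_i|-1$, or by invoking how Lemma~\ref{lemma4} is meant to be used in the sequel: along a continuous segment $C(\alpha)$ joining two Schur stable endpoints, root continuity forces the first $\alpha$ with $\nu(C(\alpha))=0$ to correspond to a complex conjugate pair arriving on the unit circle, while Corollaries~\ref{cor21} and~\ref{cor22} take care of crossings at $\pm 1$. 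This combination is what I expect will fully justify the equivalence in the form the paper requires.
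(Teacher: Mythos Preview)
Your forward direction is exactly what the paper does: it observes that $\lambda\in\Theta$ forces $\overline{\lambda}\ne\lambda$ to be a second eigenvalue of the real matrix $A$, so that $\lambda\overline{\lambda}=|\lambda|^2=1$ is an eigenvalue of $A\cdot A$ and hence $\nu(A)=0$. The paper's proof stops there and never addresses the converse.

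You are right to flag the converse as the obstacle; in fact it is \emph{false} as stated, so no case analysis will close it. Your ``parasitic'' configurations are genuine counterexamples, not removable side cases. For instance, take $A=\operatorname{diag}(2,\tfrac12)\in\mathbb{R}^{2\times 2}$: here $d=1$, the single eigenvalue of $A\cdot A$ is $2\cdot\tfrac12=1$, so $\nu(A)=\det[I-A\cdot A]=0$, yet the spectrum $\{2,\tfrac12\}$ does not meet $\Theta$ (or even the unit circle). The same phenomenon occurs with an off-circle complex quadruple $\{\mu,\overline{\mu},1/\mu,1/\overline{\mu}\}$. Thus the ``if and only if'' in Lemma~\ref{lemma4} cannot hold for an arbitrary real $A$, and the paper's one-line proof is really a proof of the $(\Rightarrow)$ implication only.

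Your second escape route---appealing to the segment $C(\alpha)$ with Schur stable endpoints---is the correct instinct for what the paper actually needs, but it repairs the \emph{applications}, not Lemma~\ref{lemma4} itself. In Theorem~\ref{thm1} the direction $(\Leftarrow)$ uses only the forward implication of Lemma~\ref{lemma4} (an eigenvalue in $\Theta$ forces $\nu(C(\tilde\alpha))=0$, hence an eigenvalue of $M$ in $[1,\infty)$), while the direction $(\Rightarrow)$ starts from the stronger hypothesis that every $C(\alpha)$ is Schur stable, which gives $|\lambda_i\lambda_j|<1$ for all pairs and hence $\nu(C(\alpha))\ne 0$ directly, without invoking the converse of Lemma~\ref{lemma4} at all. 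So the main results survive, but the lemma as written is an overstatement and your attempted converse cannot be completed.
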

\begin{proof}
  If $\lambda \in \Theta$ is an eigenvalue of $A$ then $\bar{\lambda}\not=\lambda$ and $\bar{\lambda}$ is an eigenvalue of $A$ as well and $\lambda \cdot \bar{\lambda}=|\lambda|^2=1$ is an eigenvalue of $A \cdot A$. Therefore $\nu(A)=0$.
\end{proof}

We have
\[
\begin{array}{lcl}
I-C(\alpha)\cdot C(\alpha)&=& I-(\alpha A_1+(1-\alpha) A_2 )\cdot(\alpha A_1+(1-\alpha) A_2 )\\
&=& I-A_2\cdot A_2-2\alpha(A_1\cdot A_2-A_2\cdot A_2 )\\
&& -\alpha^2 (A_1\cdot A_1+A_2\cdot A_2-2A_2\cdot A_1).
\end{array}
\]
(We have used $(A+B)\cdot(C+D)=A\cdot C+A\cdot D+B\cdot C+B\cdot D$ and $A\cdot B=B\cdot A$).
Denote
\begin{equation} \label{denklem1}
\begin{array}{lcl}
    F_0 &=& I-A_2\cdot A_2,  \\
    F_1 &=&-2(A_1\cdot A_2-A_2\cdot A_2 ),  \\
    F_2 &=&-(A_1\cdot A_1+A_2\cdot A_2-2A_2\cdot A_1). 
\end{array}
\end{equation}
Then
\[
  I-C(\alpha)\cdot C(\alpha)=F_0+\alpha F_1+\alpha^2 F_2.
\]

\begin{lemma}  \label{lemma3}
Let $A_1$ and $A_2$ be Schur matrices. $C(\alpha)$ has no eigenvalue in $\Theta$ for all $\alpha \in [0,1]$ if and only if the $(2d \times 2d)$ dimensional matrix
\[
M=\begin{bmatrix}
0&I_d \\
-F_0^{-1} F_2&-F_0^{-1} F_1
\end{bmatrix}
\]
has no real eigenvalue in $[1,\infty)$.
\end{lemma}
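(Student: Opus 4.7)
The plan is to turn the condition into a quadratic eigenvalue problem for the matrix pencil $Q(\mu) := \mu^2 F_0 + \mu F_1 + F_2$ and then linearize that pencil via the companion matrix $M$. Combining Lemma \ref{lemma4} with the expansion of $I-C(\alpha)\cdot C(\alpha)$ given just before the statement, $C(\alpha)$ has an eigenvalue in $\Theta$ if and only if $\det[F_0+\alpha F_1+\alpha^2 F_2]=0$, so the task is to characterize when this determinant vanishes for some $\alpha\in[0,1]$.

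Next I would dispose of the boundary case $\alpha=0$. Since $A_2$ is Schur, every eigenvalue $\lambda_i(A_2)\lambda_j(A_2)$ of $A_2\cdot A_2$ lies in the open unit disc, so $F_0=I-A_2\cdot A_2$ is invertible and $\det[F_0+\alpha F_1+\alpha^2 F_2]$ is nonzero at $\alpha=0$. For $\alpha\in(0,1]$ I would substitute $\mu=1/\alpha\in[1,\infty)$ and factor out $\alpha^{2d}$ from the determinant:
\[
\det[F_0+\alpha F_1+\alpha^2 F_2] \;=\; \alpha^{2d}\det[\mu^2 F_0+\mu F_1+F_2] \;=\; \alpha^{2d}\det Q(\mu).
\]
Thus the original determinant vanishes for some $\alpha\in(0,1]$ if and only if $Q(\mu)$ is singular for some $\mu\in[1,\infty)$.

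Finally I would linearize the quadratic pencil in the standard way. Multiplying $Q(\mu)v=0$ by $F_0^{-1}$ and introducing $w=\mu v$ converts the second-order equation into the first-order system $M(v,w)^\top=\mu(v,w)^\top$, and the reverse implication is equally immediate from the block structure of $M$; the top block of $M(v,w)^\top=\mu(v,w)^\top$ forces $w=\mu v$, and the bottom block is then exactly $F_0^{-1}Q(\mu)v=0$. The vector $v$ is automatically nonzero because $v=0$ combined with $w=\mu v$ would force $w=0$. Consequently the eigenvalues of $M$ are exactly the scalars $\mu$ at which $Q(\mu)$ is singular, and chaining the three equivalences above produces the stated iff.

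No step here is genuinely deep; the only subtlety is first confirming that $F_0$ is invertible, so that the companion matrix $M$ is well defined and the substitution $\mu=1/\alpha$ loses no information, after which the linearization argument is completely routine.
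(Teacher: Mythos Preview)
Your argument is correct and follows essentially the same route as the paper's proof: both reduce the condition $\det[F_0+\alpha F_1+\alpha^2 F_2]\neq 0$ on $(0,1]$ via the substitution $\mu=1/\alpha$ to a quadratic matrix pencil, and then linearize that pencil to the companion matrix $M$. Your version is in fact slightly more careful than the paper's, since you explicitly justify the invertibility of $F_0$ (from Schur stability of $A_2$), dispose of the boundary case $\alpha=0$, and spell out the eigenvector correspondence for the linearization rather than citing a determinant identity.
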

\begin{proof}
Assume that $C(\alpha)$ has no eigenvalue in $\Theta$ for all $\alpha \in [0,1]$. We write 
\[
\begin{array}{lcl}
    \nu (C(\alpha)) &=& \det[I-C(\alpha)\cdot C(\alpha)]  \\
     &=& \det[F_0 +\alpha F_1+ \alpha^2 F_2]  \\
    &=& \det[F_0^{-1}] . \det[I_d+\alpha F_0^{-1} F_1+ \alpha^2 F_0^{-1} F_2].
\end{array}
\]
Therefore, $\nu(C(\alpha))\neq 0$ for all $\alpha \in(0,1]$ if and only if $\det [I_d+\alpha F_0^{-1} F_1+ \alpha^2 F_0^{-1} F_2]\neq 0$ for all $\alpha \in(0,1]$. 

We rewrite the above determinant as follows
\[
\det[I_d+\alpha F_0^{-1} F_1+\alpha^2 F_0^{-1} F_2] =\alpha^{2d}  \det [1/\alpha^2 I_d+1/\alpha F_0^{-1} F_1+F_0^{-1} F_2].
\]
Take $\mu:=1/\alpha$, then
\[
\det[I_d+\alpha F_0^{-1} F_1+\alpha^2 F_0^{-1} F_2]=\mu^{-2d}  \det[\mu^2 I_d+\mu F_0^{-1} F_1+F_0^{-1} F_2].
\]

On the other hand, the characteristic polynomial of the matrix
$Z=\begin{bmatrix}0 & I_d \\-X & -Y\end{bmatrix}$
is
$\det[\lambda I-Z]= \det[\lambda^2 I_r+\lambda Y+X]$ (see \cite[p. 309]{Barmish1994}). Therefore 
\[
\det[\mu I - M]=\det[\mu^2 I_d+\mu F_0^{-1} F_1+F_0^{-1} F_2].
\]
Thus, $\det[I_d+\alpha F_0^{-1} F_1+\alpha^2 F_0^{-1} F_2] \neq0 $ for all $\alpha\in (0,1]$ if and only if $\det[\mu I-M]\neq0$ for all $\mu \in [1,\infty)$ (recall that $\mu=1/\alpha$ and $\alpha \in (0,1]$ $\Leftrightarrow$ $\mu \in [1,\infty)$). This means that the matrix $M$ has no real eigenvalue in $[1,\infty)$.
\end{proof}

Using the above lemmas, we derive the following result.
\begin{theorem} \label{thm1}
Let $A_1$ and $A_2$ be Schur stable matrices. $C(\alpha)$ is Schur stable for all $\alpha \in [0,1]$ if and only if
\begin{itemize}
  \item[\textit{i})] $(I-A_1 ) (I-A_2 )^{-1} $ and $(I+A_1 ) (I+A_2 )^{-1}$ have no negative real eigenvalue,
  \item[\textit{ii})] $M$ has no real eigenvalue in $[1,\infty)$.
\end{itemize}
\end{theorem}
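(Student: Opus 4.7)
The plan is to combine the three preceding lemmas, each of which rules out one portion of the unit circle, with the standard continuous-dependence-of-eigenvalues argument already invoked earlier in this section. The key observation is that the boundary of the Schur stability region decomposes as the disjoint union $\partial D=\{1\}\cup\{-1\}\cup\Theta$, so conditions (i) and (ii) collectively block every possible way an eigenvalue of $C(\alpha)$ could reach $\partial D$.

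For the forward implication I would begin by noting that Schur stability of every $C(\alpha)$ forbids $C(\alpha)$ from having any eigenvalue on $\partial D$, and in particular rules out eigenvalues $+1$, $-1$, and any point of $\Theta$. Corollary~\ref{cor21} and Corollary~\ref{cor22} then directly yield that $(I-A_1)(I-A_2)^{-1}$ and $(I+A_1)(I+A_2)^{-1}$ have no negative real eigenvalue, while Lemma~\ref{lemma3} supplies the $[1,\infty)$ condition on $M$.

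For the reverse implication the idea is to turn the boundary-avoidance assertions back into interior confinement. Under (i) and (ii), Corollaries~\ref{cor21} and~\ref{cor22} together with Lemma~\ref{lemma3} tell us that, for every $\alpha\in[0,1]$, the matrix $C(\alpha)$ has no eigenvalue on the unit circle. Since $C(0)=A_2$ is already Schur stable by hypothesis, continuous root dependence (\cite[p.~52]{Barmish1994}) forbids any eigenvalue of $C(\alpha)$ from migrating out of the open unit disk as $\alpha$ ranges over $[0,1]$: such a migration would have to pass through $\partial D$. Hence $\rho(C(\alpha))<1$ for all $\alpha\in[0,1]$.

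I do not anticipate any real obstacle here: the substantive work has already been carried out in Lemmas~\ref{lemma1}--\ref{lemma3}, and the remaining step is essentially bookkeeping. The only point that genuinely merits explicit mention is that $\{1\}$, $\{-1\}$, and $\Theta$ exhaust $\partial D$, which is precisely why the three boundary-avoidance conditions, taken together, are strong enough to conclude Schur stability of the entire segment.
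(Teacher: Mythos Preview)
Your proposal is correct and follows essentially the same line as the paper's own proof: both directions invoke Corollaries~\ref{cor21}, \ref{cor22} and Lemma~\ref{lemma3} to handle the points $1$, $-1$, and $\Theta$ on the unit circle, and the reverse implication is closed via the continuous-dependence-of-eigenvalues argument starting from the stable endpoint. The only cosmetic difference is that the paper spells out the contradiction with explicit eigenvalue branches $\lambda_i(\alpha)$, whereas you phrase it as a migration/boundary-crossing argument; the content is identical.
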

\begin{proof}
$(\Rightarrow):$
Assume that $C(\alpha)$ is Schur stable for all $\alpha\in[0,1]$. Therefore the matrix $C(\alpha)$ has no eigenvalue in $\Theta \cup \{-1,1\}$. Then by Corollary \ref{cor21}, \ref{cor22} and Lemma \ref{lemma3}, $i$) and $ii$) are satisfied.

$(\Leftarrow):$ By the theorem of continuity eigenvalues on a parameter (see \cite[p. 52]{Barmish1994}), there exists continuous functions $\lambda_i:[0,1]\to \mathbb{C}$ ($i=1,2,\dots,n$) such that $\lambda_1 (\alpha)$, $\lambda_2 (\alpha)$, $\dots$, $\lambda_n (\alpha)$ are eigenvalues of $C(\alpha)$. Here $|\lambda_i (0)|<1$ ($i=1,2,\dots,n$), since the matrix $C(0)$ is Schur stable. Suppose that $C(\alpha_*)$ is not Schur stable for some $\alpha_* \in(0,1)$. Therefore, there exists an index $i_0\in\{1,2,\dots,n\}$ such that $|\lambda_{i_0} (\alpha_*)|\geq 1$. In view of the continuity of $\lambda_{i_0} (\alpha)$ with respect to $\alpha$, there must exists $\tilde{\alpha}\in(0,1)$ such that $| \lambda_{i_0} (\tilde{\alpha})|=1$. The matrix $C(\tilde{\alpha})$ has an eigenvalue which lies on the unit circle. If the real eigenvalue is $1$ this contradicts Corollary \ref{cor21} and $i$). If the real eigenvalue is $-1$, this contradicts Corollary \ref{cor22} and $i$). If the eigenvalue is complex, this contradicts $ii$). These contradictions show that $C(\alpha)$ is stable for all $\alpha \in [0,1]$. 
\end{proof}
\begin{example} \label{yenironek}
  Consider the matrix segment $[A_1,A_2]$ with Schur stable matrices
\[
    A_1=\begin{bmatrix}0.1&-0.2&0.4\\-0.2&0.3&0.6\\-0.3&0.2&0.1\end{bmatrix}
    \mbox{ and }
    A_2=\begin{bmatrix}0.3&0.5&0.2\\0.6&0.1&-0.6\\-0.3&-0.2&0.4\end{bmatrix}.
\]
The eigenvalues of the matrices $(I-A_1)(I-A_2)^{-1}$ and $(I+A_1)(I+A_2)^{-1}$ are $13.4$, $0.2$, $1.2$ and $2.5$, $0.8$, $0.4$ respectively. From \eqref{denklem1}, we have
\[
  \begin{array}{c}
    F_0=\begin{bmatrix}1.27&0.3&0.32\\-0.09&0.82&-0.24\\0.09&-0.06&1.08\end{bmatrix}, \ 
    F_1=\begin{bmatrix}-0.86&-0.52&-0.96\\0.05&0.11&0.47\\-0.46&0.14&-0.53\end{bmatrix},\ \\
    F_2=\begin{bmatrix}0.6&0.08&0.88\\0.08&-0.06&-0.13\\0.32&-0.24&0.54\end{bmatrix}.
  \end{array}
\]
The eigenvalues of matrix
\[
M=\begin{bmatrix}
0&I_3 \\
-F_0^{-1} F_2&-F_0^{-1} F_1
\end{bmatrix}=
\begin{bmatrix}
0&0&0&1&0&0\\
0&0&0&0&1&0\\
0&0&0&0&0&1\\
-0.35&-0.15&-0.56&0.55&0.48&0.74\\
-0.21&0.12&-0.03&0.11&-0.13&-0.37\\
-0.27&0.24&-0.45&0.38&-0.17&0.40
\end{bmatrix}
\]
are $-0.81$, $0.33$, $0.54\pm0.75i$, $0.10\pm0.39i$. Hence $M$ has no real eigenvalue in $[1,\infty)$. By Theorem \ref{thm1}, the segment $[A_1,A_2]$ is robust stable.
\end{example}
\begin{example} \label{example38}
For the Schur matrices
\[
    A_1=\begin{bmatrix}-21.456&-28.539&-26.541\\12.582&16.758&15.552\\2.808&3.627&3.663\end{bmatrix}
    \mbox{ and }
    A_2=\begin{bmatrix}-0.2394&-1.1466&-2.9484\\1.89&3.15&3.15\\-2.9106&-3.8934&-1.4616\end{bmatrix},
\]
we consider the matrix segment $[A_1,A_2]$. The eigenvalues of the matrix $(I-A_1)(I-A_2)^{-1}$ are calculated as $-10.3584$, $-0.4883$ and $7.6502$. Since the matrix has at least one negative eigenvalue, $C(\alpha)=\alpha A_1+(1-\alpha) A_2$ has eigenvalue $\lambda=1$ for some $\alpha \in (0,1)$ by Lemma \ref{lemma1}. Similarly, the eigenvalues of matrix $(I+A_1)(I+A_2)^{-1}$ are $-4.5996$, $-0.01153$, $0.4870$, $C(\alpha)=\alpha A_1+(1-\alpha) A_2$ has eigenvalue $\lambda=-1$ for some $\alpha \in (0,1)$ by Lemma \ref{lemma1} (see Fig. \ref{figure1:sub1} ). As a result there is a Schur unstable matrix in the segment $[A_1,A_2]$  by Theorem \ref{thm1} (see Fig. \ref{figure1:sub2}).
\end{example}
\begin{figure}[h]
\centering
\begin{subfigure}{.5\textwidth}
  \centering
  \includegraphics[width=5.3cm]{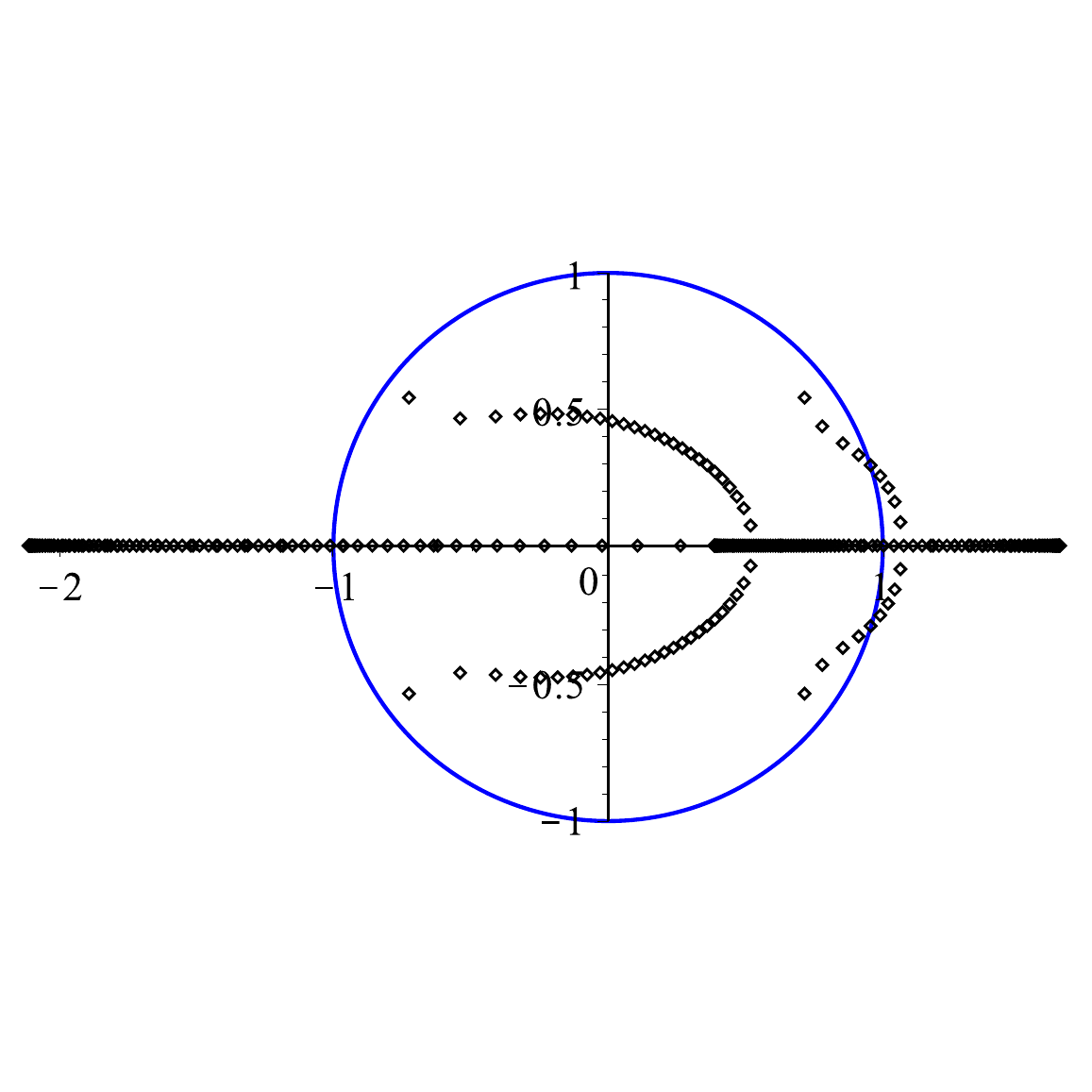}
  \caption{Eigenvalues of the $C(\alpha)$ for $\alpha \in [0,1]$. }
  \label{figure1:sub1}
\end{subfigure}%
\begin{subfigure}{.5\textwidth}
  \centering
  \includegraphics[width=7.4cm]{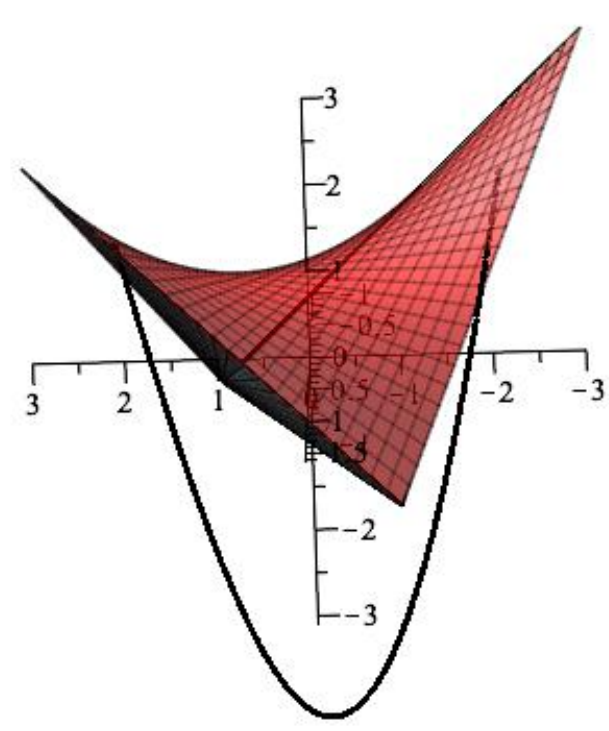}
  \caption{The set of coefficient vectors of third order monic Schur stable polinomials and the curve corresponding to the coefficient vectors of the characteristic polynomials of $C(\alpha)$ for $\alpha \in [0,1]$.}
  \label{figure1:sub2}
\end{subfigure}
\caption{Example \ref{example38}}
\label{figure1}
\end{figure}

Let $A$ be a Metzler matrix that is all off-diagonal elements of $A$ are nonnegative. Then there exists $\alpha \in \mathbb{R}$ and a nonnegative matrix $P$ such that $A=P-\alpha I$. By Perron's theorem the spectral radius $\rho(P)$ is an eigenvalue of $P$, therefore $\rho(P)-\alpha$ is an eigenvalue of $A$. Consequently if $A$ or $-A$ is Metzler matrix then it has a real eigenvalue.

If $\lambda$ is a complex eigenvalue of $A$ then the complex conjugate $\bar{\lambda}$ is also an eigenvalue. Consequently, if $A$ or $-A$ is $2\times 2$ Metzler matrix then it has no complex eigenvalues. Additionally, if $A_1$ and $A_2$ are $2\times 2$ Metzler then $C(\alpha)$ are Metzler for all $\alpha \in [0,1]$.

Summarizing above we have
\begin{theorem} \label{teorem26}
  Let $A_1$ and $A_2$ are $2\times 2$ Metzler Schur stable (or $-A_1$ and $-A_2$ are $2\times 2$ Metzler Schur stable) matrices. The convex combination $C(\alpha)$ is Schur stable for all $\alpha \in [0,1]$ if and only if $(I-A_1 ) (I-A_2 )^{-1} $ and $(I+A_1 ) (I+A_2 )^{-1}$ have no negative real eigenvalue.
\end{theorem}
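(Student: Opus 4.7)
The plan is to deduce Theorem \ref{teorem26} directly from Theorem \ref{thm1} by showing that, under the Metzler hypothesis, condition \textit{ii}) of Theorem \ref{thm1} is automatic, so that only condition \textit{i}) survives.

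First I would argue that the Metzler property propagates along the segment. If $A_1$ and $A_2$ are both Metzler, then for every $\alpha\in[0,1]$ the off-diagonal entries of $C(\alpha)=\alpha A_1+(1-\alpha)A_2$ are nonnegative convex combinations of nonnegative numbers, hence $C(\alpha)$ is Metzler as well. The same is true with $-A_1,-A_2$ Metzler, because then $-C(\alpha)=\alpha(-A_1)+(1-\alpha)(-A_2)$ is Metzler. This reduces the two cases in the statement to a single observation: for every $\alpha\in[0,1]$, either $C(\alpha)$ or $-C(\alpha)$ is a $2\times 2$ Metzler matrix.

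Next I would use the spectral fact recalled in the paragraph preceding the theorem. Any $2\times 2$ Metzler matrix can be written as $P-\beta I$ with $P\geq 0$ and $\beta\in\mathbb{R}$, and by Perron's theorem such a matrix has at least one real eigenvalue; since complex eigenvalues of a real $2\times 2$ matrix must come in conjugate pairs, the remaining eigenvalue is real as well. Hence if $C(\alpha)$ (or $-C(\alpha)$) is $2\times 2$ Metzler, both eigenvalues of $C(\alpha)$ are real. This means $C(\alpha)$ cannot have any eigenvalue in the set $\Theta=\{z\in\mathbb{C}:|z|=1,\ z\neq\pm 1\}$ for any $\alpha\in[0,1]$. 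By Lemma \ref{lemma3}, this is equivalent to saying that the matrix $M$ of Theorem \ref{thm1} has no real eigenvalue in $[1,\infty)$; that is, condition \textit{ii}) is satisfied automatically.

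Finally I would assemble the equivalence. Applying Theorem \ref{thm1}, Schur stability of $C(\alpha)$ for all $\alpha\in[0,1]$ is equivalent to \textit{i}) and \textit{ii}). Since \textit{ii}) holds for free under the Metzler hypothesis, the robust Schur stability of the segment $[A_1,A_2]$ is equivalent to \textit{i}) alone, which is exactly the claim of Theorem \ref{teorem26}. I do not anticipate a serious obstacle here; the only subtle point is the case split between $A_i$ Metzler and $-A_i$ Metzler, which I would handle uniformly by noting that $A$ and $-A$ have the same set of absolute values of eigenvalues, so the realness of the eigenvalues of $-C(\alpha)$ is as good as that of $C(\alpha)$ for ruling out eigenvalues in $\Theta$.
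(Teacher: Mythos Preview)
Your proposal is correct and follows essentially the same approach as the paper: both argue that the Metzler hypothesis forces every $C(\alpha)$ (or $-C(\alpha)$) to have only real eigenvalues, so no eigenvalue can lie in $\Theta$ and condition \textit{ii}) of Theorem~\ref{thm1} becomes redundant, leaving only condition \textit{i}). Your write-up is simply a more detailed unpacking of the one-line justification the paper gives.
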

\begin{proof}
By the aboves, condition $ii$) in Theorem \ref{thm1} is redundant.
\end{proof}

Consider the stability region $D$ \eqref{dkumesi}. Denote
\begin{equation} \label{denklem2}
\begin{array}{lcl}
    \tilde{F}_0 &=& \displaystyle I-A_2\cdot A_2+2\delta A_2\cdot I_n-\delta^2 I_n \cdot I_n,  \\
     \tilde{F}_1 &=&-2(A_1\cdot A_2-A_2\cdot A_2 )+2\delta(A_1\cdot I_n-A_2\cdot I_n),  \\
     \tilde{F}_2 &=&-(A_1\cdot A_1+A_2\cdot A_2-2A_2\cdot A_1). 
\end{array}
\end{equation}
\begin{theorem}
Let $A_1$ and $A_2$ be $D$-stable matrices (all eigenvalues lie in $D$). $C(\alpha)$ is $D$-stable for all $\alpha \in [0,1]$ if and only if
\begin{itemize}
  \item[\textit{i})] $\left[(\delta+r)I-A_1\right] \left[(\delta+r)I-A_2\right]^{-1} $ and $\left[(\delta-r)I+A_1\right]\left[(\delta-r)I+A_2 \right]^{-1}$ have no negative real eigenvalue,
  \item[\textit{ii})] $M=\begin{bmatrix}0 & I\\- \tilde{F}_0^{-1}  \tilde{F}_2&- \tilde{F}_0^{-1}  \tilde{F}_1  \end{bmatrix}$ has no real eigenvalue in $[1,\infty)$.
\end{itemize}
\end{theorem}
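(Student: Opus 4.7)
The plan is to mirror the proof of Theorem~\ref{thm1} on the disk $D$, with the boundary circle $\partial D$ (center $\delta$, radius $r$) replacing the unit circle. The two real boundary points $\delta\pm r$ play the role of $\pm 1$, the non-real part of $\partial D$ plays the role of $\Theta$, and I would establish a separate analog of each of Lemmas~\ref{lemma1}, \ref{lemma2}, \ref{lemma4}, and~\ref{lemma3} before gluing them together by the continuity-of-eigenvalues argument already used in Theorem~\ref{thm1}.

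For the rightmost real point $\delta+r$, I would write
$C(\alpha)-(\delta+r)I=\alpha[A_1-(\delta+r)I]+(1-\alpha)[A_2-(\delta+r)I]$
and factor out $A_2-(\delta+r)I$ exactly as in Lemma~\ref{lemma1}. The sign statement $(-1)^n\det[A_j-I]>0$ used there came from $A_j-I$ being Hurwitz; the replacement $(-1)^n\det[A_j-(\delta+r)I]>0$ follows from the same Routh--Hurwitz argument, because $D$-stability of $A_j$ forces $\operatorname{Re}(\lambda-(\delta+r))<0$ for every eigenvalue $\lambda$ of $A_j$. The rest of the proof of Lemma~\ref{lemma1} then carries over verbatim and yields the first condition in~(i). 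The analog at the leftmost real point $\delta-r$ is obtained in parallel, exactly in the way Lemma~\ref{lemma2} was deduced from Lemma~\ref{lemma1}, and yields the second condition in~(i).

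For non-real eigenvalues on $\partial D$, the geometric identity $(\lambda-\delta)(\bar\lambda-\delta)=r^2$ rearranges to $\lambda\bar\lambda-\delta(\lambda+\bar\lambda)+\delta^2-r^2=0$. Combined with the spectral properties of the bialternate product, namely that $A\cdot A$ has eigenvalues $\lambda_i\lambda_j$ and $2\,A\cdot I_n$ has eigenvalues $\lambda_i+\lambda_j$ over the pairs $i<j$, this says that $A\cdot A-2\delta(A\cdot I_n)+(\delta^2-r^2)I_d$ has $0$ as an eigenvalue whenever $A$ has a non-real eigenvalue on $\partial D$; this is the analog of Lemma~\ref{lemma4}. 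Substituting $A=C(\alpha)$ and expanding by the bilinearity and symmetry of the bialternate product gives $\tilde{F}_0+\alpha\tilde{F}_1+\alpha^2\tilde{F}_2$ with $\tilde{F}_0,\tilde{F}_1,\tilde{F}_2$ as in~\eqref{denklem2}. The substitution $\mu=1/\alpha$ and the companion-matrix identification used in Lemma~\ref{lemma3} then convert nonsingularity of this pencil on $\alpha\in(0,1]$ into the spectral condition~(ii) for $M$.

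Finally, the three analog lemmas are assembled exactly as in the proof of Theorem~\ref{thm1}: if some $C(\alpha_*)$ failed to be $D$-stable, continuity of the eigenvalues would force an eigenvalue of $C(\tilde{\alpha})$ to cross $\partial D$ at some $\tilde{\alpha}\in(0,1)$, and the three possibilities (crossing at $\delta+r$, at $\delta-r$, or at a non-real point of $\partial D$) would each contradict hypothesis~(i) or~(ii). The main obstacle I anticipate is the bookkeeping in the complex-eigenvalue step: one must verify cleanly that $A\cdot A-2\delta(A\cdot I_n)+(\delta^2-r^2)I_d$ really has eigenvalues $(\lambda_i-\delta)(\lambda_j-\delta)-r^2$, and that the $\alpha$-expansion matches~\eqref{denklem2} term by term, tracking the new $2\delta$ and $\delta^2$ contributions that were absent in Section~\ref{sec2}. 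Apart from this bialternate algebra, everything is a direct adaptation of the arguments already carried out for the unit disk.
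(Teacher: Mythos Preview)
Your plan is correct, but the paper's own proof is the single line ``The proof is immediate,'' and the shortcut it has in mind is the affine change of variables $A\mapsto (A-\delta I)/r$. This map sends $D$ onto the open unit disk and preserves convex combinations, so $C(\alpha)$ is $D$-stable for all $\alpha$ if and only if $\tilde C(\alpha)=\alpha\tilde A_1+(1-\alpha)\tilde A_2$ is Schur stable for all $\alpha$, where $\tilde A_i=(A_i-\delta I)/r$; one then applies Theorem~\ref{thm1} to $\tilde A_1,\tilde A_2$ and simplifies. The factors of $r$ cancel in $(I\mp\tilde A_1)(I\mp\tilde A_2)^{-1}$ to produce the two matrices in~(i), and the $F_k$ built from $\tilde A_1,\tilde A_2$ are scalar multiples of the $\tilde F_k$ in~\eqref{denklem2}, so $F_0^{-1}F_k=\tilde F_0^{-1}\tilde F_k$ and the companion matrix $M$ is literally the same, giving~(ii). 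This substitution also dissolves the obstacle you flag: bilinearity yields $(A-\delta I)\cdot(A-\delta I)=A\cdot A-2\delta(A\cdot I_n)+\delta^2 I_d$, so your detection matrix is exactly $(A-\delta I)\cdot(A-\delta I)-r^2 I_d$, whose eigenvalues $(\lambda_i-\delta)(\lambda_j-\delta)-r^2$ come directly from the known spectrum of $B\cdot B$ with $B=A-\delta I$, and you never have to argue that $A\cdot A$ and $A\cdot I_n$ share an eigenbasis. In short, your route re-proves each lemma for the shifted disk; the paper's route reduces to the unit-disk case in one step and does no new work.
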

\begin{proof}
The proof is immediate.
\end{proof}

\section{Robust stability of a one matrix polytope}\label{sec3}

In this section using results of Section \ref{sec2} we obtain condition for robust stability of a matrix polytope with rank one uncertainty, namely consider a polytope
\begin{equation}\label{dnkl31}
  \mathcal{A}=\mathrm{co}\{A_1,A_2,\dots,A_N\}, \quad A_i=B_0+B_i \quad (i=1,2,\dots,N)
\end{equation}
where $B_0 \in \mathbb{R}^{n\times n}$, $\mathrm{rank}(B_i)=1$. For $A\in \mathbb{R}^{n\times n}$ it is well known that
\[
  \begin{array}{lcl}
    \mathrm{rank}(A)=1 &\Leftrightarrow& A=bc^T \mbox{for some nonzero column vectors }b,c \in \mathbb{R}^n.\\
  \end{array}
\]
Therefore we will assume that
\[
  B_i=bc_i^T \mbox{ or } B_i=b_ic^T \ (i=1,2,\dots,N).
\]
\begin{lemma}\label{lemma31}
  For $A \in \mathbb{R}^{n\times n}$, if $\mathrm{rank}(A)=1$ then the eigenvalues of $A$ are zero with multiplicity $(n-1)$ and $\mathrm{trace}(A)$ with multiplicity $1$.
\end{lemma}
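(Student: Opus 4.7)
The plan is to exploit the standard structural characterization of rank-one matrices: since $\mathrm{rank}(A)=1$, I can write $A=bc^T$ for some nonzero column vectors $b,c\in\mathbb{R}^n$. Once $A$ is in this outer-product form, both the eigenvalue zero and the nonzero eigenvalue fall out almost immediately, and the proof reduces to two short observations.

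First I would handle the zero eigenvalue using the rank-nullity theorem: the null space of $A=bc^T$ coincides with the hyperplane $\{x:c^Tx=0\}$, so $\dim\ker A=n-1$. This gives $n-1$ linearly independent eigenvectors for the eigenvalue $0$, so the geometric, and hence algebraic, multiplicity of $0$ is at least $n-1$. Next I would produce the remaining eigenvalue explicitly by computing $Ab=b(c^Tb)=(c^Tb)b$, which shows that $b$ is an eigenvector with eigenvalue $c^Tb$. Since $c^Tb=\mathrm{trace}(bc^T)=\mathrm{trace}(A)$, this identifies the candidate nonzero eigenvalue with the trace.

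To close the count and confirm the multiplicities are exactly $(n-1,1)$, I would either invoke that the sum of eigenvalues equals $\mathrm{trace}(A)$ (forcing the last eigenvalue to be $\mathrm{trace}(A)$ once $n-1$ of them are known to be zero), or — more cleanly — apply the matrix determinant lemma to obtain
\[
\det(\lambda I - bc^T)=\lambda^n\Bigl(1-\tfrac{c^Tb}{\lambda}\Bigr)=\lambda^{n-1}(\lambda - c^Tb),
\]
which is the factored characteristic polynomial and exhibits the multiplicities directly.

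I do not anticipate a real obstacle; the only subtlety worth noting is the degenerate case $c^Tb=0$, in which $\mathrm{trace}(A)=0$ and the "nonzero" eigenvalue collapses onto the zero eigenvalue. The statement still reads correctly in that case (zero with total multiplicity $n$), and the characteristic polynomial factorization above remains valid.
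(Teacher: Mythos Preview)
Your proof is correct and follows essentially the same route as the paper: write $A=bc^T$, observe that $\ker A=\{x:c^Tx=0\}$ has dimension $n-1$ so zero is an eigenvalue of multiplicity at least $n-1$, and then recover the last eigenvalue as $\mathrm{trace}(A)$ via the trace identity. Your additional observations (the explicit eigenvector $Ab=(c^Tb)b$, the alternative finish via the matrix determinant lemma, and the remark on the degenerate case $c^Tb=0$) are nice embellishments but not needed beyond what the paper does.
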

\begin{proof}
  Assume that $A=uv^T$ for nonzero $u,v \in \mathbb{R}^n$. The equation $v^Tx=0$ has $(n-1)$ linear independent solutions $x^1,x^2,\dots,x^{n-1}\in \mathbb{R}^n$. Then $Ax^i =0.x^i$ $(i=1,2,\dots,n-1)$ and $\lambda_1=\lambda_2=\dots=\lambda_{n-1}=0$. On the other hand from the well known equality
\[
  \lambda_1+\lambda_2+\cdots+\lambda_n=\mathrm{trace}(A)
\]
it follows that $\lambda_n=\mathrm{trace}(A)$.
\end{proof}
\begin{lemma}\label{lemma32}
  If $A=uv^T$ ($u,v\in \mathbb{R}^n$) then $A\cdot A=0$.
\end{lemma}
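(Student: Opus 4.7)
The plan is to compute $A\cdot A$ entry-wise directly from the definition of the bialternate product. When both factors coincide, the two $2\times 2$ determinants appearing in the definition are identical, so their average simplifies to
\[
  (A\cdot A)_{ij,kl} = \begin{vmatrix} a_{ik} & a_{il}\\ a_{jk} & a_{jl}\end{vmatrix}.
\]
Now writing $A = uv^T$ entry-wise gives $a_{pq} = u_p v_q$, so each such minor becomes $(u_i v_k)(u_j v_l) - (u_i v_l)(u_j v_k)$, which equals zero after cancellation. Every entry of $A\cdot A$ therefore vanishes, yielding $A\cdot A = 0$.

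It is worth noting a tempting but insufficient alternative route: by Lemma \ref{lemma31}, a rank-one matrix has at most one nonzero eigenvalue, and by the spectral property of the bialternate product stated just after its definition, the eigenvalues of $A\cdot A$ are the products $\lambda_i\lambda_j$ with $i<j$. Since every such product involves at least one zero eigenvalue, $A\cdot A$ has only zero eigenvalues. This proves $A\cdot A$ is nilpotent but not that it is the zero matrix, so a spectral argument alone does not finish the job. The direct entry-wise calculation is both necessary and essentially effortless: there is no real obstacle, since the vanishing of every $2\times 2$ minor of $uv^T$ is just a restatement of $\mathrm{rank}(uv^T)\le 1$.
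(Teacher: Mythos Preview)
Your proof is correct and follows essentially the same approach as the paper: an entry-wise computation from the definition of the bialternate product, using $a_{pq}=u_pv_q$ to see that each $2\times 2$ minor vanishes. Your added remark about why the spectral argument only yields nilpotency is a nice clarification, but the core argument matches the paper's.
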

\begin{proof}
  \[
    A=\begin{bmatrix}u_1\\ u_2\\ \vdots \\ u_n \end{bmatrix}
        \begin{bmatrix}v_1 & v_2 & \cdots & v_n \end{bmatrix}=
        \begin{bmatrix}u_1v_1 & u_1v_2 & \cdots & u_1v_n\\u_2v_1 & u_2v_2 & \cdots & u_2v_n\\\vdots & \vdots & \ddots & \vdots\\u_nv_1 & u_nv_2 & \cdots & u_nv_n\end{bmatrix}.
  \]
The entries of the matrix $A\cdot A$ are
\[
  \begin{array}{lcl}
    f_{ij,kl}&=&\displaystyle \frac{1}{2}\left(\begin{vmatrix}
          u_iv_k & u_iv_l\\
	  u_jv_k & u_jv_l	  	
	\end{vmatrix}+
	\begin{vmatrix}
          u_iv_k & u_iv_l\\
	  u_jv_k & u_jv_l	  	
	\end{vmatrix}
	\right)=
	\begin{vmatrix}
          u_iv_k & u_iv_l\\
	  u_jv_k & u_jv_l	  	
	\end{vmatrix}\\
	&=&
	u_iu_j
	\begin{vmatrix}
          v_k & v_l\\
	  v_k & v_l	  	
	\end{vmatrix}=0.
    \end{array}
  \]
  Therefore $A\cdot A=0$.
\end{proof}
\begin{lemma}\label{lemma33}
  Let a segment $[A_1,A_2]$ be given, where $A_1$ and $A_2$ are Schur stable, $\mathrm{rank}(A_1-A_2)=1$ with $A_1=A_0+ba^T$, $A_2=A_0+bc^T$. Then $[A_1,A_2]$ is robustly Schur stable if and only if
\begin{itemize}
  \item[\textit{i})] the matrices $(I-A_1)(I-A_2 )^{-1}$ and $(I+A_1)(I+A_2)^{-1}$ have no real negative eigenvalue,
  \item[\textit{ii})] $-F_0^{-1}F_1$ has no real eigenvalue in $[1,\infty)$, where $F_0$ and $F_1$ are defined in \eqref{denklem1}.
\end{itemize}
\end{lemma}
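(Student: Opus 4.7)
The plan is to apply Theorem \ref{thm1} to the segment $[A_1,A_2]$ and exploit the rank-one hypothesis to collapse the matrix $M$ to a simpler form. Condition \textit{i}) of the lemma is literally condition \textit{i}) of Theorem \ref{thm1}, so no work is needed there. All the substance lies in reducing condition \textit{ii}) of Theorem \ref{thm1} (which involves the full $2d\times 2d$ matrix $M$) to the stated spectral condition on the single $d\times d$ block $-F_0^{-1}F_1$.

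First I would rewrite $F_2$ using the bilinearity and symmetry of the bialternate product (both already invoked in the derivation of \eqref{denklem1}):
\[
  F_2 \;=\; -\bigl(A_1\cdot A_1 - 2\,A_1\cdot A_2 + A_2\cdot A_2\bigr) \;=\; -(A_1-A_2)\cdot(A_1-A_2).
\]
Under the hypothesis $A_1 - A_2 = b(a-c)^T$, the difference has rank one, so Lemma \ref{lemma32} applies with $u=b$ and $v=a-c$ and gives $(A_1-A_2)\cdot(A_1-A_2)=0$, hence $F_2=0$.

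With $F_2=0$, the matrix $M$ from Theorem \ref{thm1} becomes block upper triangular,
\[
  M \;=\; \begin{bmatrix} 0 & I_d \\ 0 & -F_0^{-1}F_1 \end{bmatrix},
\]
and a direct expansion of $\det(\lambda I_{2d}-M)$ yields $\lambda^{d}\,\det(\lambda I_d + F_0^{-1}F_1)$. Thus the spectrum of $M$ is the disjoint union of $\{0\}$ (with multiplicity $d$) and the spectrum of $-F_0^{-1}F_1$. Since $0\notin[1,\infty)$, condition \textit{ii}) of Theorem \ref{thm1} is equivalent to the requirement that $-F_0^{-1}F_1$ have no real eigenvalue in $[1,\infty)$, which is exactly condition \textit{ii}) of the lemma. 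Combining this with Theorem \ref{thm1} finishes the proof.

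The only nontrivial step is the vanishing of $F_2$; once $F_2=0$ is established, the reduction of $M$ to its diagonal block is an elementary cofactor computation, and the rest is bookkeeping on top of Theorem \ref{thm1}. I do not expect any genuine obstacle, but care is needed to invoke bilinearity and symmetry of ``$\cdot$'' correctly when rewriting $F_2$ as $-(A_1-A_2)\cdot(A_1-A_2)$.
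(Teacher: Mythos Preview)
Your proposal is correct and follows essentially the same approach as the paper: show $F_2=0$ via Lemma~\ref{lemma32} applied to the rank-one matrix $A_1-A_2=b(a-c)^T$, reduce $M$ to block upper triangular form, and read off its spectrum before invoking Theorem~\ref{thm1}. The only cosmetic difference is that you factor $F_2=-(A_1-A_2)\cdot(A_1-A_2)$ directly from bilinearity, whereas the paper first expands each $A_i\cdot A_j$ in terms of $A_0$, $ba^T$, $bc^T$ and then recombines; both routes land on $[b(a-c)^T]\cdot[b(a-c)^T]=0$.
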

\begin{proof}
  \[
    \begin{array}{lcl}
      A_1\cdot A_1&=&A_0\cdot A_0+2A_0\cdot (ba^T )+(ba^T )\cdot (ba^T ),\\
      A_2\cdot A_2&=&A_0\cdot A_0+2A_0\cdot (bc^T )+(bc^T )\cdot (bc^T ),\\
      A_1\cdot A_2&=&A_0\cdot A_0+A_0\cdot (bc^T )+A_0\cdot (ba^T )+(ba^T )\cdot (bc^T ).
    \end{array}
  \]
  From \eqref{denklem1},
  \[
    \begin{array}{lcl}
      -F_2&=&A_1\cdot A_1+A_2\cdot A_2-2A_2\cdot A_1\\
             &=&(ba^T )\cdot (ba^T )-2(ba^T )\cdot (bc^T )+(bc^T )\cdot (bc^T )\\
             &=&(ba^T-bc^T )\cdot (ba^T-bc^T )\\
             &=&[b(a^T-c^T )]\cdot [b(a^T-c^T )]=0
    \end{array}
  \]
  by Lemma \ref{lemma32}. Therefore the matrix $M$ in Lemma \ref{lemma3} becomes
  \[
    M=\begin{bmatrix} 0 & I_d\\ 0 & -F_0^{-1}F_1 \end{bmatrix} 
  \]
  The eigenvalues of $M$ consist of $0$ (with multiplicity $d$) and the eigenvalues of the matrix $-F_0^{-1}F_1$. Then the necessary and sufficient condition for Schur stability of $\mathcal{A}$ follows from Theorem \ref{thm1}. 
\end{proof}
\begin{lemma}\label{lemma34}
  If $A=uv^T$ $(u,v\in\mathbb{R}^n)$ then
  \[
    \mathrm{trace}(A)=1-\det(I-A).
  \]
\end{lemma}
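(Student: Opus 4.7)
The plan is to apply Lemma \ref{lemma31} directly to compute $\det(I-A)$ via the eigenvalues of $A$. Since $A=uv^T$ has rank one (assuming $u,v$ nonzero; the degenerate case is trivial as both sides vanish), Lemma \ref{lemma31} tells us that the eigenvalues of $A$ are $\lambda_1=\lambda_2=\cdots=\lambda_{n-1}=0$ and $\lambda_n=\mathrm{trace}(A)$.

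Next, I would use the standard fact that if $\lambda_1,\dots,\lambda_n$ are the eigenvalues of $A$, then the eigenvalues of $I-A$ are $1-\lambda_1,\dots,1-\lambda_n$, and so
\[
\det(I-A)=\prod_{i=1}^{n}(1-\lambda_i).
\]
Substituting the eigenvalues from Lemma \ref{lemma31} yields
\[
\det(I-A)=\underbrace{1\cdot 1\cdots 1}_{n-1 \text{ times}}\cdot (1-\mathrm{trace}(A))=1-\mathrm{trace}(A),
\]
and rearranging gives the claimed identity $\mathrm{trace}(A)=1-\det(I-A)$.

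There is essentially no obstacle here: the entire content is the spectral description of a rank-one matrix already furnished by Lemma \ref{lemma31}. The only minor point worth noting is that the identity also admits a direct one-line derivation via the matrix determinant lemma, $\det(I-uv^T)=1-v^Tu=1-\mathrm{trace}(uv^T)$, which could be mentioned as an alternative proof, but the eigenvalue argument fits more naturally with the preceding development in the paper.
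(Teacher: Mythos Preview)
Your proof is correct and follows essentially the same argument as the paper: invoke Lemma~\ref{lemma31} for the eigenvalues of $A$, pass to the eigenvalues $1,\dots,1,\,1-\mathrm{trace}(A)$ of $I-A$, and use that the determinant is the product of the eigenvalues. The only addition is your mention of the matrix determinant lemma as an alternative, which the paper does not include.
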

\begin{proof}
  The eigenvalues of $A$ are $\lambda_1=\cdots=\lambda_{n-1}=0$, $\lambda_n=\mathrm{trace}(A)$ (Lemma \ref{lemma31}). The eigenvalues of $I-A$ are $\mu_1=\cdots=\mu_{n-1}=1$, $\mu_n=1-\mathrm{trace}(A)$. The determinant of any matrix equals to the product of the eigenvalues, therefore $\det(I-A)=1-\mathrm{trace}(A)$.
\end{proof}
\begin{lemma}\label{lemma35}
  Consider the polytope $\mathcal{A}$ defined by \eqref{dnkl31} and let $A$ be any matrix from $\mathcal{A}$:
\[
  A=\alpha_1A_1+\cdots+\alpha_N A_N, \ \sum_{i=1}^N \alpha_i=1, \ \alpha_i \geq 0.
\]
Then
\[
  p_A(s)=\alpha_1p_{A_1}(s)+\alpha_2p_{A_2}(s)+\cdots+\alpha_Np_{A_N}(s)
\] 
where $p_A(s)$ is the characteristic polynomial of $A$.
\end{lemma}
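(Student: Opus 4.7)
The plan is to exploit the common rank-one factor to express every convex combination as a rank-one perturbation of $B_0$, and then invoke the matrix determinant lemma, which is affine in the rank-one factor.

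First I would treat the case $B_i = b c_i^T$ (the case $B_i = b_i c^T$ is handled by the same argument applied to transposes, since $p_A = p_{A^T}$). Fix $\alpha_1,\dots,\alpha_N \geq 0$ with $\sum_i \alpha_i = 1$, and set $c = \sum_i \alpha_i c_i$. By linearity,
\[
A = B_0 + \sum_i \alpha_i\, b c_i^T = B_0 + b c^T,
\]
so $A$ is itself a rank-one update of $B_0$.

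Next I would apply the matrix determinant lemma to compute $p_A(s)$. For every scalar $s$ outside the spectrum of $B_0$,
\[
p_A(s) = \det(sI - B_0 - b c^T) = \det(sI - B_0)\bigl(1 - c^T(sI - B_0)^{-1}b\bigr).
\]
Substituting $c = \sum_i \alpha_i c_i$ and using $\sum_i \alpha_i = 1$,
\[
1 - c^T(sI - B_0)^{-1}b = \sum_i \alpha_i\bigl(1 - c_i^T(sI - B_0)^{-1}b\bigr),
\]
so that
\[
p_A(s) = \sum_i \alpha_i\, \det(sI - B_0)\bigl(1 - c_i^T(sI - B_0)^{-1}b\bigr) = \sum_i \alpha_i\, p_{A_i}(s).
\]

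This identity has been established on the cofinite set $\mathbb{C} \setminus \sigma(B_0)$, but both sides are polynomials in $s$, so they must agree everywhere. The only mild subtlety is this polynomial-continuation step and the separate handling of the $B_i = b_i c^T$ case; neither is a real obstacle, as the first is automatic and the second reduces to the one already proved by passing to transposes.
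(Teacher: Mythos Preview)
Your proof is correct and follows essentially the same route as the paper's: both reduce to the affine dependence of $\det(sI-B_0-bc^T)$ on $c$ via the matrix determinant lemma, which the paper packages as its Lemma~\ref{lemma34} (the identity $\det(I-uv^T)=1-\mathrm{trace}(uv^T)$). You are in fact slightly more careful, handling the polynomial-continuation step at $s\in\sigma(B_0)$ and the $B_i=b_ic^T$ case explicitly, whereas the paper leaves both implicit.
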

\begin{proof}
  $A=B_0+(\alpha_1B_1+\alpha_2B_2+\cdots+\alpha_NB_N)$,
  \[
    \begin{array}{lcl}
      p_A(s)&=&\det[sI-B_0-(\alpha_1B_1+\alpha_2B_2+\cdots+\alpha_NB_N)]\\
            &=&\det\left[(sI-B_0)(I-(sI-B_0)^{-1})(\sum_{i=1}^N \alpha_ibc_i^T)\right]\\
            &=&\det[sI-B_0]\det[I-\tilde{b}(\tilde{c})^T], \ \left(\tilde{b}=(sI-B_0)^{-1}b,\ \tilde{c}=\left(\sum_{i=1}^N \alpha_ic_i\right)^T\right).
    \end{array}
  \]
  Using Lemma \ref{lemma34}
  \[
    \begin{array}{lcl}
      p_A(s)&=&\displaystyle \det [(sI-B_0) (1-\mathrm{trace}\,\tilde{b}(\tilde{c})^T) ]\\
      &=&\displaystyle \det[(sI-B_0)(1-\sum_{i=1}^N\alpha_i\mathrm{trace}\, \tilde{b}\tilde{c_i}^T)]\\
      &=&\displaystyle \det[(sI-B_0)(1-\sum_{i=1}^N \alpha_i(1-\det[I-\tilde{b}c_i^T]))]\\
      &=&\displaystyle \det[(sI-B_0)(1-1+\sum_{i=1}^N \alpha_i \det[I-\tilde{b}c_i^T])]\\
      &=&\displaystyle \sum_{i=1}^N \alpha_i \det[sI-B_0] \det[I-(sI-B_0)^{-1}bc_i^T]\\
      &=&\displaystyle \sum_{i=1}^N \alpha_i \det\left[(sI-B_0)(I-(sI-B_0)^{-1}bc_i^T)\right]\\
      &=&\displaystyle \sum_{i=1}^N \alpha_i \det[sI-B_0-bc_i^T]\\
      &=&\displaystyle \sum_{i=1}^N \alpha_i p_{A_i}(s)
    \end{array}
  \]
\end{proof}
Now we arrive at the main result of this section.
\begin{theorem} \label{teorem36}
  Let the family \eqref{dnkl31} be given with Schur stable generators $A_i$. This family is robustly Schur stable if and only if
  \begin{itemize}
    \item[\textit{i})] $(I-A_i)(I-A_j )^{-1}$ and $(I+A_i)(I+A_j)^{-1}$ have no negative real eigenvalues ($i,j=1,2,\dots,N$; $i<j$),
    \item[\textit{ii})] $-{\left(F^{ij}_0\right)}^{-1}F^{ij}_1$ have no real eigenvalue in $[1,\infty)$ ($i,j=1,2,\dots,N$; $i<j$), where $F_0^{ij}$ and $F_1^{ij}$ are defined by \eqref{denklem1} with replacing $A_1$ by $A_i$ and $A_2$ by $A_j$ respectively.
  \end{itemize}
\end{theorem}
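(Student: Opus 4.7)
The plan is to derive Theorem \ref{teorem36} from three ingredients already at hand: Lemma \ref{lemma33} (which characterizes robust Schur stability of a single rank-one matrix segment by conditions (i)--(ii) at the pair level), Lemma \ref{lemma35} (which says the characteristic polynomial of any member of $\mathcal{A}$ is the corresponding convex combination of the vertex characteristic polynomials $p_{A_i}$), and the classical Edge Theorem for polytopes of polynomials applied to the open unit disk.

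For the necessity direction I would simply observe that each matrix segment $[A_i,A_j]$ is a subset of $\mathcal{A}$ and hence must itself be robustly Schur stable; since $\mathrm{rank}(A_i-A_j)=1$, applying Lemma \ref{lemma33} pairwise produces exactly conditions (i) and (ii) for every pair $i<j$.

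For sufficiency, assume (i) and (ii). First I would apply Lemma \ref{lemma33} in the reverse direction to each pair $(A_i,A_j)$ to conclude that every matrix segment $[A_i,A_j]$ is robustly Schur stable. Passing to characteristic polynomials and using Lemma \ref{lemma35} on each two-vertex subpolytope $\mathrm{co}\{A_i,A_j\}$, every edge of the polytope of polynomials $\mathcal{P}=\mathrm{co}\{p_{A_1},\dots,p_{A_N}\}$ consists entirely of Schur polynomials. Because all $p_{A_i}$ are monic of degree $n$, $\mathcal{P}$ is a polytope of fixed-degree monic polynomials, so the Edge Theorem (valid for any open simply connected root region, including the open unit disk) upgrades edge stability to Schur stability of the whole polytope $\mathcal{P}$. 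Applying Lemma \ref{lemma35} one more time yields $p_A\in\mathcal{P}$ for every $A\in\mathcal{A}$, so every such $A$ has all its eigenvalues in the open unit disk.

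The main obstacle I anticipate is not a computation but the justification for invoking the Edge Theorem in the Schur setting: one must verify that degree is preserved along the polytope of polynomials (trivial here since the vertices are all monic of degree $n$) and cite or sketch the Edge Theorem for the unit disk. Given those, the theorem reduces cleanly to the pairwise characterization of Lemma \ref{lemma33}.
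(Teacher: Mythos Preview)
Your proposal is correct and follows essentially the same route as the paper: reduce robust stability of $\mathcal{A}$ to stability of the polynomial polytope $\mathcal{P}=\mathrm{co}\{p_{A_1},\dots,p_{A_N}\}$ via Lemma \ref{lemma35}, invoke the Edge Theorem to reduce to the edges $[p_{A_i},p_{A_j}]$, identify these with the characteristic polynomials of the matrix segments $[A_i,A_j]$, and then apply Lemma \ref{lemma33} pairwise. The only cosmetic differences are that you split the argument into separate necessity and sufficiency directions (the paper writes a single chain of equivalences) and that you flag the degree-preservation hypothesis for the Edge Theorem explicitly; one tiny caveat is that $A_i-A_j=b(c_i-c_j)^T$ has rank at most one, so strictly speaking Lemma \ref{lemma33} applies when $c_i\neq c_j$, the case $c_i=c_j$ being trivial.
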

\begin{proof}
  By Lemma \ref{lemma35} the family of the characteristic polynomials of the matrix family $\mathcal{A}$ \eqref{dnkl31} is the polynomial polytope
  \[
    \mathcal{P}=\mathrm{co}\{p_{A_1}(s),\dots,p_{A_N}(s)\},
  \]
  and $\mathcal{A}$ is robustly stable if and only if $\mathcal{P}$ is robustly stable. By the Edge Theorem (\cite[p. 153]{Barmish1994}) the polytope $\mathcal{P}$ is robustly stable if and only if all the edges
  \[
    \begin{array}{lcl}
      [p_{A_i}(s),p_{A_j}(s)]&=&\{\alpha p_{A_i}(s)+(1-\alpha) p_{A_j}(s): \ \alpha \in [0,1]\}\\
      &=&\mathrm{co}\{p_{A_i}(s),p_{A_j}(s)\} \quad (i,j=1,2,\dots,N; \ i<j)
    \end{array}
  \]
  are stable. The polynomial segment $[p_{A_i}(s),p_{A_j}(s)]$ is the set of the characteristic polynomials of the matrix segment $[A_i,A_j]$. By Lemma \ref{lemma33} the segment $[A_i,A_j]$ is stable if and only if $i$) and $ii$) are satisfied.
  
  Summarizing; $\mathcal{A}$ is stable $\Leftrightarrow \ \mathcal{P}$ is stable $\Leftrightarrow$ All edges $[p_{A_i}(s),p_{A_j}(s)]$ are stable $\Leftrightarrow$ All matrix segment $[A_i,A_j]$ are stable $\Leftrightarrow$ $i$) and $ii$) are satisfied.
 \end{proof}

\begin{example}
For the vectors $b=(1,-1,1)^T$, $c_1=(0.5,0.5,-0.5)^T$, $c_2=(-0.25,0.5,0.5)^T$, $c_3=(0.1,-0.1,-0.1)^T$ and matrix
\[
  B_0=\begin{bmatrix}-0.3&0.3&-0.3\\ -0.1&0.1&-0.1\\ 0.2&-0.2&0.2 \end{bmatrix}
\]
  consider the polytope $\mathcal{A}=\mathrm{co}\{A_1,A_2,A_3\}$ where
  \[
    \begin{array}{l}
      	A_1=B_0+b\, c_1^T=\begin{bmatrix} 0.2&0.8&-0.8\\-0.6&-0.4&0.4\\0.7&0.3&-0.3\end{bmatrix},\\ \\
      	A_2=B_0+b\, c_2^T=\begin{bmatrix} -0.55&0.8&0.2\\0.15&-0.4&-0.6\\-0.05&0.3&0.7\end{bmatrix},\\ \\
      	A_3=B_0+b\, c_3^T=\begin{bmatrix} -0.2&0.2&-0.4\\-0.2&0.2&0\\0.3&-0.3&0.1\end{bmatrix}
    \end{array}
  \]
are Schur stable matrices. The matrices $(I-A_i)(I-A_j)^{-1}$ and $(I+A_i)(I+A_j)^{-1}$ have no negative real eigenvalue ($i,j=1,2,3$, $i<j$). We calculate
  \[
    \begin{array}{ll}
    	F_0^{12}=\begin{bmatrix}0.9&-0.3&0.4\\0.125&1.375&-0.5\\-0.025&-0.075&1.1\end{bmatrix},&
    	F_1^{12}=\begin{bmatrix}-0.3&0.7&-0.4\\0.375&-0.875&0.5\\-0.075&0.175&-0.1\end{bmatrix},\\ \\
	F_0^{13}=\begin{bmatrix}1&0.08&-0.08\\0&0.9&0.1\\0&0.02&0.98\end{bmatrix},&
    	F_1^{13}=\begin{bmatrix}-0.4&0.32&0.08\\0.5&-0.4&-0.1\\-0.1&0.08&0.02\end{bmatrix},\\ \\
	F_0^{23}=\begin{bmatrix}1&0.08&-0.08\\0&0.9&0.1\\0&0.02&0.98\end{bmatrix},&
    	F_1^{23}=\begin{bmatrix}-0.1&-0.38&0.48\\0.125&0.475&-0.6\\-0.025&-0.095&0.12\end{bmatrix}.
    \end{array}
  \]
  The eigenvalues of $-(F_{0}^{ij})^{-1}F_1^{ij}$ are not in $[1,\infty)$ ($i,j=1,2,3$, $i<j$). By Theorem \ref{teorem36}, all matrices in the family $\mathcal{A}$ are Schur stable.
\end{example}
\section{Conclusion}
In this study, we have investigated the robust Schur stability of $n\times n$ dimensional matrix segments using the bialternate product of matrices. We have shown that the problem can be reduced to checking the existence of negative eigenvalues in two out of three specially constructed matrices and the presence of eigenvalues in the interval $[1,\infty)$ for the third matrix.

We have provided a necessary and sufficient condition for the convex combinations of two stable matrices with rank one difference to be robustly Schur stable. Furthermore, we have demonstrated that the robust stability of the convex hull of a finite number of matrices, whose pairwise differences are of rank $1$, is equivalent to the robust stability of the segments formed by these matrices.

The obtained results have been illustrated through examples, showcasing their applicability in analyzing the stability of matrix polytopes with rank one uncertainty. These findings can be particularly useful in the stability analysis of linear systems subject to polytopic uncertainties, which are commonly encountered in control systems.

\bibliographystyle{unsrtnat}
\bibliography{references}

\end{document}